\documentclass[11pt]{article}
\usepackage{a4wide}
\usepackage{amsmath,amssymb,amsthm}

\newcommand{\R}{\mathbb{R}}

\newcommand{\e}{\mathrm{e}}
\newcommand{\MM}{\mathcal{M}}
\newcommand{\PP}{\mathcal{P}}

\providecommand{\abs}[1]{\lvert #1\rvert}

\providecommand{\pos}{\mathop{\rm pos}\nolimits}

\providecommand{\tq}{\,\vert\,}
\providecommand{\vol}{\mathop\mathrm{vol}\nolimits}

\newtheorem{Thm}{Theorem}
\newtheorem{Lem}[Thm]{Lemma}
\newtheorem{Prop}[Thm]{Proposition}
\newtheorem{Cor}[Thm]{Corollary}

\theoremstyle{definition}
\newtheorem{Def}[Thm]{Definition}
\theoremstyle{remark}

\title{Partitions and functional Santal\'o inequalities}

\author{Joseph Lehec
\footnote{LAMA (UMR CNRS 8050) Universit\'e Paris-Est}}

\date{September 2008}

\begin{document}
\maketitle
\begin{abstract}
We give a direct proof of a functional Santal\'o inequality due to Fradelizi and Meyer. This provides a new proof of the Blaschke-Santal\'o inequality. The argument combines a logarithmic form of the Pr\'ekopa-Leindler inequality and a partition theorem of Yao and Yao.
\bigskip

\noindent
Published in Arch. Math. 92 (1) (2009) 89-94.
\end{abstract}
\section*{Introduction}
If $A$ is a subset of $\R^n$ we let $A^\circ$ be the polar of $A$:
\[ A^\circ = \{ x\in\R^n \tq \forall y\in A, \; x\cdot y \leq 1 \} ,\]
where $x\cdot y$ denotes the scalar product of $x$ and $y$. We denote the Euclidean norm of $x$ by $\abs{x} = \sqrt{x\cdot x}$. Let $K$ be a subset of $\R^n$ with finite measure. The Blaschke-Santal\'o inequality states that there exists a point $z$ in $\R^n$ such that
\begin{equation}
\label{classique-santalo}
 \vol_n (K)  \vol_n (K-z)^\circ \leq 
        \vol_n (B_2^n) \vol_n(B_2^n)^\circ = v_n^2 ,
\end{equation}
where $\vol_n$ stands for the Lebesgue measure on $\R^n$, $B_2^n$ for the Euclidean ball and $v_n$ for its volume. It was first proved by Blaschke in dimension $2$ and $3$ and Santal\'o \cite{santalo} extended the result to any dimension. We say that an element $z$ of $\R^n$ satisfying \eqref{classique-santalo} is a \emph{Santal\'o point} for $K$. \\
Throughout the paper a \emph{weight} is an measurable function $\rho: \R_+ \to \R_+$ such that for any $n$, the function $x\in \R^n\mapsto \rho(\abs{x})$ is integrable.
\begin{Def}
Let $f$ be a non-negative integrable function on $\R^n$, and $\rho$ be a weight. We say that $c\in \R^n$ is a Santal\'o point for $f$ with respect to $\rho$ if the following holds: for all non-negative Borel function $g$ on $\R^n$, if
\begin{equation}
\label{duality}
\forall x,y \in \R^n, \quad x\cdot y \geq 0 \; \Rightarrow \; 
f(c + x) g(y)  \leq  \rho \bigl( \sqrt{x\cdot y} \bigr) ^2 , 
\end{equation}
then
\begin{equation}
\label{conclusion}
\int_{\R^n} f(x) \, dx  \int_{\R^n} g(y) \, dy \leq  \bigl( \int_{\R^n}  \rho ( \abs{x} ) \, dx \bigr)^2 .
\end{equation}  
\end{Def}
Heuristically, the choice of the weight $\rho$ gives a notion of duality (or polarity) for non-negative functions. Our purpose is give a new proof of the following theorem, due to Fradelizi and Meyer~\cite{fradelizi-meyer}.
\begin{Thm}
\label{thmfm}
Let $f$ be non-negative and integrable. There exists $c\in\R^n$ such that $c$ is a Santal\'o point for $f$ with respect to any $\rho$. Moreover, if $f$ is even then $0$ is a Santal\'o point for $f$ with respect to any weight.
\end{Thm}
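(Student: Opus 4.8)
The plan is to combine an elementary ``engine'' --- the functional Santal\'o inequality on an orthant, deduced from the Pr\'ekopa--Leindler inequality by a logarithmic change of variables --- with the Yao--Yao partition theorem, which provides simultaneously the Santal\'o point $c$ and a decomposition of $\R^n$ into $2^n$ pieces on which the engine applies. After the standard reductions (by approximation assume $f$ continuous, positive and rapidly decreasing; by homogeneity $\int f=1$) one isolates the engine as follows. Let $C=O\R_+^n$ with $O$ orthogonal, a cone with apex $0$ isometric to the positive orthant, so that $x\cdot y\ge0$ for all $x,y\in C$. If $p,q\ge0$ vanish outside $C$ and $p(x)q(y)\le\rho(\sqrt{x\cdot y})^2$ on $C\times C$, then
\[
 \int_C p\,\int_C q\;\le\;\Bigl(\int_C\rho(\abs{z})\,dz\Bigr)^2
    \;=\;4^{-n}\Bigl(\int_{\R^n}\rho(\abs{x})\,dx\Bigr)^2 ,
\]
the last equality because a rotated orthant has solid angle $2^{-n}$.

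To prove the engine, substitute $z=O(\e^{s_1},\dots,\e^{s_n})$. Then $x\cdot y=\sum_i\e^{s_i+t_i}$ depends only on $u:=\tfrac12(s+t)$, and writing $P(s)=p(O\e^{s})\,\e^{s_1+\dots+s_n}$, similarly $Q$, and $R(u)=\rho\bigl(\sqrt{\textstyle\sum_i\e^{2u_i}}\bigr)\e^{u_1+\dots+u_n}$, one has $\int P=\int_C p$, $\int Q=\int_C q$, $\int R=\int_C\rho(\abs{z})\,dz$, and the hypothesis becomes $P(s)Q(t)\le R\bigl(\tfrac{s+t}2\bigr)^2$. Pr\'ekopa--Leindler with weight $\tfrac12$ --- in the multiplicative (``logarithmic'') form ``$\sqrt{P(s)Q(t)}\le R(\tfrac{s+t}2)$ for all $s,t$ implies $\sqrt{\int P\int Q}\le\int R$'' --- then gives the claim.

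Next I would invoke the Yao--Yao partition theorem for the measure $f\,dx$, balanced against the rotation-invariant measure $\rho(\abs{x})\,dx$: it yields a point $c$ and a partition of $\R^n$ into $2^n$ cones $C_1,\dots,C_{2^n}$ with common apex $c$, each of $f$-measure $2^{-n}$ and of solid angle $2^{-n}$, and --- this is the point where the precise output of the theorem matters --- each acute and carrying coordinates in which the computation above is valid, so that $\int_{C_k}f\cdot\int_{C_k}g\le4^{-n}P_\rho^2$, where $P_\rho:=\int_{\R^n}\rho(\abs{x})\,dx$. (One first translates so that $c=0$ and replaces $f$ by $f(c+\cdot\,)$; then \eqref{duality} reads $f(x)g(y)\le\rho(\sqrt{x\cdot y})^2$ for $x\cdot y\ge0$, which applies on $C_k\times C_k$ by acuteness, after multiplying $f$ and $g$ by $\mathbf 1_{C_k}$.) Since $\int_{C_k}f=2^{-n}$, this forces $\int_{C_k}g\le2^{-n}P_\rho^2$, and summing over the $2^n$ pieces (which cover $\R^n$) yields $\int g\le P_\rho^2$, i.e.\ \eqref{conclusion}. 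If $f$ is even then $f\,dx$ is symmetric, the partition may be taken symmetric with apex $0$, and the same argument shows that $0$ is a Santal\'o point for every weight.

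The whole difficulty is concentrated in this second use of Yao--Yao: one needs a partition that both balances $f$ (and the rotation-invariant measure) \emph{and} consists of cones admitting the logarithmic linearization of $x\mapsto\sqrt{x\cdot y}$ into a midpoint. For a rotated orthant that linearization is immediate, but $2^n$ rotated orthants cannot tile $\R^n$ once $n\ge3$, so a genuinely more flexible family of cones must be used, and verifying that the partition theorem delivers cones of a suitable shape (together with the solid-angle normalization that makes the final summation saturate exactly at $P_\rho^2$) is the real content --- as is, of course, importing the Yao--Yao theorem itself, whose proof is a Borsuk--Ulam/degree-theoretic existence argument. Granting that input, the change of variables, the appeal to Pr\'ekopa--Leindler, and the recombination are routine.
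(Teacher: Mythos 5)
You have the right two ingredients --- the logarithmic form of Pr\'ekopa--Leindler obtained from the substitution $x_i=\e^{s_i}$, and a Yao--Yao equipartition of the measure $f\,dx$ --- and you are right to be suspicious of your recombination step, because as you have set it up it cannot be carried out. You ask the partition theorem for $2^n$ cones with common apex that simultaneously (i) equipartition $f\,dx$, (ii) each have solid angle $2^{-n}$, and (iii) are each acute (so that $x\cdot y\ge 0$ on $C_k\times C_k$, letting you apply \eqref{duality} with $f$ and $g$ both restricted to $C_k$, and so that the exponential substitution linearizes $\sqrt{x\cdot y}$). The Yao--Yao theorem equipartitions a \emph{single} measure into simplicial cones $c+\pos(v_1,\dotsc,v_n)$ for arbitrary bases $v_1,\dotsc,v_n$; it gives no control on solid angles, and a general simplicial cone is not acute. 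Your own remark that rotated orthants cannot tile $\R^n$ for $n\ge 3$ is precisely the obstruction, so the ``input'' you grant at the end is not available and the argument has a genuine gap.

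The missing idea is duality of cones, and it removes all three extra demands at once. Write $A=T(\R_+^n)$ with $\det T=1$ (normalize the basis), and put $S=(T^{-1})^*$, so that $S(\R_+^n)=A^*=\{y\tq x\cdot y\ge 0\ \forall x\in A\}$ and $T(x)\cdot S(y)=x\cdot y\ge 0$ for all $x,y\in\R_+^n$. Apply your engine to $f\circ T$, $g\circ S$ and $f_3(x)=\rho(\abs{x})$ on the \emph{standard} orthant: the hypothesis \eqref{duality} transports with no acuteness assumption, and since $\det T=\det S=1$ the conclusion reads
\[ \int_A f\,\int_{A^*}g\ \le\ \Bigl(\int_{\R_+^n}\rho(\abs{x})\,dx\Bigr)^2\ =\ 4^{-n}\Bigl(\int_{\R^n}\rho(\abs{x})\,dx\Bigr)^2 , \]
the weight being always integrated over $\R_+^n$, so no solid-angle normalization is needed. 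The price is that $g$ is integrated over $A^*$ rather than $A$, so to sum over the cells you must know that the dual cones $\{A^*\tq A\in\PP\}$ again form a partition of $\R^n$. This is the second nontrivial property of Yao--Yao partitions (besides the equipartition itself): for every linear form vanishing at the center there is at least one cell on which it is nonnegative and at most one on which it is positive away from the center, which translates exactly into $\cup_{A\in\PP}A^*=\R^n$ with essentially disjoint interiors. Granting that, $\sum_{A\in\PP}\int_{A^*}g=\int_{\R^n}g$ and $\int_A f=2^{-n}\int_{\R^n}f$, and summing the displayed bound over the $2^n$ cells gives \eqref{conclusion}. The even case then goes through as you say, since an even measure admits a Yao--Yao equipartition centered at the origin.
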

The even case goes back to Keith Ball in \cite{these-ball}, this was the first example of a functional version of \eqref{classique-santalo}. Later on, Artstein, Klartag and Milman~\cite{artstein-klartag-milman} proved that any integrable $f$ admits a Santal\'o point with respect to the weight $t\mapsto\e^{-t^2/2}$. Moreover in this case the barycenter of $f$ suits (see \cite{simpleFS}). Unfortunately this is not true in general; indeed, taking 
\begin{align*} 
f & = \boldsymbol{1}_{(-2,0)} + 4  \boldsymbol{1}_{(0,1)} \\ 
g & = \boldsymbol{1}_{(-0.5,0]} + \tfrac{1}{4} \boldsymbol{1}_{(0,1)} \\
\rho & = \boldsymbol{1}_{[0,1]} ,
\end{align*}
it is easy to check that $f$ has its barycenter at $0$, and that $f(s) g(t) \leq \rho \bigl( \sqrt{st} \bigr)^2 $ as soon as $st\geq 0$. However
\[ \int_\R f(s) \, ds \int_\R g(t) \, dt = \frac{9}{2} > 
4 = \bigl( \int_\R \rho{ (\abs{r}) } \, dr \bigr)^2. \]
To prove the existence of a Santal\'o point, the authors of \cite{fradelizi-meyer} use a fixed point theorem and the usual Santal\'o inequality (for convex bodies). Our proof is direct, in the sense that it does not use the Blashke Santal\'o inequality; it is based on a special form of the Pr\'ekopa-Leindler inequality and on a partition theorem due to Yao and Yao~\cite{yaoyao}.  \\
Lastly, the Blaschke-Santal\'o inequality follows very easily from Theorem~\ref{thmfm}: we let the reader check that if $c$ is a Santal\'o point for $\boldsymbol{1}_K$ with respect to the weight $\boldsymbol{1}_{[0,1]}$ then $c$ is a Santal\'o point for $K$.
\section{Yao-Yao partitions}
In the sequel we consider real affine spaces of finite dimension. If $E$ is such a space we denote by $\vec{E}$ the associated vector space. We say that $\PP$ is a partition of $E$ if $\cup \PP = E$ and if the interiors of two distinct elements of $\PP$ do not intersect. 
For instance, with this definition, the set $\{(-\infty,a],[a,+\infty)\}$ is a partition of $\R$. We define by induction on the dimension a class of partitions of an $n$-dimensional affine space.
\begin{Def}
\label{YYP}
If $E=\{c\}$ is an affine space of dimension $0$, the only possible partition $\PP=\{c\}$ is a Yao-Yao partition of $E$, and its center is defined to be $c$. \\
Let $E$ be an affine space of dimension $n\geq 1$. A set $\PP$ is said to be a Yao-Yao partition of $E$ if there exists an affine hyperplane $F$ of $E$, a vector $v\in \vec{E} \backslash \vec{F}$ and two Yao-Yao partitions $\PP_+$ and $\PP_-$ of $F$ \emph{having the same center $c$} such that 
\[ \PP= \bigl\{  A + \R_- v \tq A\in \PP_- \bigr\} \cup \bigl\{ A + \R_+ v \tq A\in \PP_+ \Bigr\} ,  \]
The center of $\PP$ is then $x$.
\end{Def}
If $A$ is a subset of $\vec{E}$ we denote by $\pos(A)$ the positive hull of $A$, that is to say the smallest convex cone containing $A$. \\
A Yao-Yao partition $\PP$ of an $n$-dimensional space $E$ has $2^n$ elements and for each $A$ in $\PP$ there exists a basis $v_1, \dotsc , v_n$ of $\vec{E}$ such that 
\begin{equation}
\label{pos} 
A = c + \pos(v_1, \dotsc , v_n) ,
\end{equation}
where $c$ is the center of $\PP$.
Indeed, assume that $\PP$ is defined by $F,v,\PP_+$ and $\PP_-$ (see Definition~\ref{YYP}). Let $A\in \PP_+$ and assume inductively that there is a basis $v_1,\dotsc, v_{n-1}$ of $\vec{F}$ such that $A = c  + \pos ( v_1,\dotsc, v_{n-1} )$. Then $A + \R_+ v =  c + \pos( v , v_1, \dotsc, v_{n-1} )$. 

A fundamental property of this class of partitions is the following
\begin{Prop}
\label{halfspace}
Let $\PP$ be a Yao-Yao partition of $E$ and $c$ its center. Let $\ell$ be an affine form on $E$ such that $\ell(c)=0$. Then there exists $A\in \PP$ such that $\ell(x) \geq 0$ for all $x\in A$. Moreover there is at most one element $A$ of $\PP$ such that $\ell(x) > 0$ for all $x\in A\backslash\{c\}$.
\end{Prop}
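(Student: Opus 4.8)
The plan is to induct on $n = \dim E$. For $n = 0$ both assertions are immediate: $\PP = \{\{c\}\}$, $\ell(c) = 0 \ge 0$, and $A \setminus \{c\} = \emptyset$ for the unique cell, so the strict condition holds vacuously for (at most, in fact exactly) one cell. For the inductive step, write $\PP$ in terms of $F$, $v$, $\PP_+$, $\PP_-$ as in Definition~\ref{YYP}, with common center $c$. Since $\ell$ is affine with $\ell(c) = 0$, put $\ell(x) = \vec\ell(x - c)$ for a linear form $\vec\ell$ on $\vec E$, and record the identity $\ell(a + tv) = \ell(a) + t\,\vec\ell(v)$ for $a \in F$, $t \in \R$, where $\vec\ell(v) = \ell(c + v)$. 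Note that $c \in F$, so $\ell$ restricts to an affine form on $F$ vanishing at $c$; hence the induction hypothesis applies to both $\PP_+$ and $\PP_-$.

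For the first assertion, apply the induction hypothesis inside $F$ to get $A_+ \in \PP_+$ and $A_- \in \PP_-$ with $\ell \ge 0$ on $A_+$ and on $A_-$. If $\vec\ell(v) \ge 0$, I claim $A := A_+ + \R_+ v$ works: any $x \in A$ is $a + tv$ with $a \in A_+$ and $t \ge 0$, so $\ell(x) = \ell(a) + t\,\vec\ell(v) \ge 0$. If $\vec\ell(v) < 0$, then symmetrically $A := A_- + \R_- v$ works, since now $t \le 0$ makes $t\,\vec\ell(v) \ge 0$ while $\ell(a)\ge 0$ on $A_-$. In either case $A \in \PP$ and $\ell \ge 0$ on all of $A$.

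For the second assertion, call a cell $A \in \PP$ \emph{strict} if $\ell(x) > 0$ for all $x \in A \setminus \{c\}$. First observe that each cell of $\PP$ contains the ray $c + \R_+ v$ or the ray $c - \R_+ v$, according to whether it comes from $\PP_+$ or from $\PP_-$ (because $c$ lies in every cell of $\PP_\pm$, being their center). Testing the strictness condition along this ray, with $\ell(c \pm tv) = \pm t\,\vec\ell(v)$, forces: a strict cell coming from $\PP_+$ has $\vec\ell(v) > 0$, and a strict cell coming from $\PP_-$ has $\vec\ell(v) < 0$. Hence if $\vec\ell(v) = 0$ there are no strict cells; if $\vec\ell(v) > 0$ every strict cell has the form $A = A' + \R_+ v$ with $A' \in \PP_+$; and if $\vec\ell(v) < 0$ every strict cell has the form $A = A' + \R_- v$ with $A' \in \PP_-$. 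In the case $\vec\ell(v) > 0$, since $A' \subseteq A$ one has $A' \setminus \{c\} \subseteq A \setminus \{c\}$, so $A$ strict forces $\ell > 0$ on $A' \setminus \{c\}$, i.e. $A'$ is a strict cell of $\PP_+$; as $A' \mapsto A' + \R_+ v$ is injective, the number of strict cells of $\PP$ is at most the number of strict cells of $\PP_+$, which is $\le 1$ by the induction hypothesis. The case $\vec\ell(v) < 0$ is identical with $\PP_-$ in place of $\PP_+$, and $\vec\ell(v) = 0$ gives zero strict cells. This proves the claim.

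The only points requiring care are bookkeeping ones: tracking strict versus non-strict inequalities through the induction, and checking that the apex $c$ behaves correctly under the gluing operation — in particular that $(A' + \R_\pm v) \setminus \{c\}$ is swept out by the translated rays, and that the degenerate case $\vec\ell(v) = 0$ (together with the vacuous zero-dimensional base case) is consistent with the counting. I do not anticipate any genuine obstacle beyond this.
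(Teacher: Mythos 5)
Your proof is correct and follows essentially the same route as the paper: induction on $\dim E$, with the case split on the sign of $\ell(c+v)$ to pick the cell $A_+ + \R_+v$ or $A_- + \R_-v$. The paper dismisses the ``moreover'' part as ``similar,'' whereas you spell it out carefully (testing $\ell$ along the ray $c+\R_{\pm}v$ to rule out strict cells on the wrong side, then reducing to the at-most-one statement for $\PP_+$ or $\PP_-$); this is exactly the intended argument.
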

\begin{proof}
By induction on the dimension $n$ of $E$. When $n=0$ it is obvious, we assume that $n\geq 1$ and that the result holds for all affine spaces of dimension $n-1$. Let $\ell$ be an affine form on $E$ such that $\ell(c)= 0$. We introduce $F,v,\PP_+,\PP_-$ given by Definition~\ref{YYP}.  By the induction assumption, there exists $A_+ \in \PP_+$ and $A_-\in\PP_-$ such that 
\[ \forall y\in A_+ \cup A_- \quad  \ell(y) \geq 0 . \]
If $\ell(c+v) \geq 0$ then $\ell(x+tv)\geq 0$ for all $x\in A_+$ and $t\in\R_+$, thus $\ell(x) \geq 0$ for all $x\in A_+ +\R_+v$. If on the contrary $\ell(c+v)\leq 0$ then $\ell(x)\geq 0$ for all $x\in A_- + \R_- v$, which proves the first part of the proposition. The proof of the `moreover' part is similar.
\end{proof}
The latter proposition yields the following corollary, which deals with dual cones: if $C$ is cone of $\R^n$ the dual cone of $C$ is by definition
\[ C^* = \{ y \in \R^n \tq \forall x\in C, \; x\cdot y \geq 0 \} . \]
\begin{Cor}
\label{polarity}
Let $\PP$ be a Yao-Yao partition of $\R^n$ centered at $0$. Then
\[ \PP^* := \{ A^* \tq A \in \PP \} \]
is also a partition of $\R^n$.
\end{Cor}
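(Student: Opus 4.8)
The plan is to read everything off from Proposition~\ref{halfspace}, applied to the linear forms $\ell(x) = y\cdot x$. First I would record the shape of the members of $\PP^*$. Since the center of $\PP$ is $0$, each $A\in\PP$ has the form $A = \pos(v_1,\dotsc,v_n)$ for some basis $v_1,\dotsc,v_n$ of $\R^n$ by~\eqref{pos}. Consequently $A^* = \{\, y\in\R^n \tq y\cdot v_i\geq 0,\ i=1,\dotsc,n \,\}$ is a closed simplicial cone with non-empty interior, and its interior is $\{\, y \tq y\cdot v_i>0,\ i=1,\dotsc,n \,\}$. The point to extract from this is that if $y$ lies in the interior of $A^*$, then $y\cdot x > 0$ for every $x\in A\backslash\{0\}$: writing $x=\sum_i \lambda_i v_i$ with $\lambda_i\geq 0$ not all zero gives $y\cdot x = \sum_i \lambda_i (y\cdot v_i) > 0$.

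Next I would prove that $\cup\,\PP^* = \R^n$. Given $y\in\R^n$, apply the first part of Proposition~\ref{halfspace} to the affine form $\ell\colon x\mapsto y\cdot x$, which vanishes at the center $0$ of $\PP$: there is some $A\in\PP$ with $y\cdot x\geq 0$ for all $x\in A$, that is, $y\in A^*$. Hence every point of $\R^n$ lies in some member of $\PP^*$.

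Finally I would check that two distinct members of $\PP^*$ have disjoint interiors. Suppose $y$ lies in the interior of both $A^*$ and $B^*$ for some $A,B\in\PP$. By the observation of the first step, the linear form $\ell(x)=y\cdot x$ satisfies $\ell(x)>0$ for every $x\in A\backslash\{0\}$ and every $x\in B\backslash\{0\}$, while $\ell(0)=0$. The `moreover' part of Proposition~\ref{halfspace} asserts there is at most one element of $\PP$ with this property, so $A=B$. This is all that is needed, and there is no genuine obstacle; the only mildly delicate point is the identification of the interior of the simplicial cone $A^*$ together with the resulting strict positivity of $\ell$ on $A\backslash\{0\}$, which is exactly what allows one to invoke the uniqueness clause of the proposition.
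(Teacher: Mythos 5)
Your proof is correct and follows essentially the same route as the paper: covering $\R^n$ via the first part of Proposition~\ref{halfspace} and disjointness of interiors via its `moreover' clause. The only difference is that you justify in detail, using the simplicial description \eqref{pos} of the cones, the step that a point interior to $A^*$ gives a form strictly positive on $A\backslash\{0\}$ --- a step the paper asserts without elaboration.
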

Actually the dual partition is also a Yao-Yao partition centered at $0$ but we will not use this fact. 
\begin{proof}
Let $x\in\R^n$ and $\ell:y\in\R^n\mapsto x\cdot y$. By the previous proposition there exists $A\in \PP$ such that $\ell(y)\geq 0$ for all $y\in A$. Then $x \in A^*$. Thus $\cup \PP^* = \R^n$. Moreover if $x$ belongs to the interior of $A^*$, then for all $y\in A\backslash \{0\}$ we have $\ell(y)>0$. Again by the proposition above there is at most one such $A$. Thus the interiors of two distinct elements of $\PP^*$ do not intersect.
\end{proof}
We now let $\MM(E)$ be the set of Borel measure $\mu$ on $E$ which are finite and which satisfy $\mu(F)=0$ for any affine hyperplane $F$. 
\begin{Def}
\label{equipartition}
Let $\mu\in\MM(E)$, a Yao-Yao equipartition $\PP$ for $\mu$ is a Yao-Yao partition of $E$ satisfying 
\begin{equation}
\label{equip}
\forall A\in\PP, \quad  \mu(A) = 2^{-n} \mu(E).
\end{equation}
We say that $c\in E$ is a Yao-Yao center of $\mu$ if $c$ is the center of a Yao-Yao equipartition for $\mu$. 
\end{Def}
Here is the main result concerning those partitions.
\begin{Thm}
\label{main}
Let $\mu \in \MM( \R^n )$, there exists a Yao-Yao equipartition for $\mu$. Moreover, if $\mu$ is even then $0$ is a Yao-Yao center for $\mu$.
\end{Thm}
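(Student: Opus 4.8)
The plan is to prove the theorem by induction on the dimension $n$, following the recursive structure of Definition~\ref{YYP}. For $n=0$ the statement is trivial since $E=\{c\}$ and $\PP=\{c\}$ is the only partition; if $\mu$ is even, then necessarily $E=\{0\}$ (as $\mu\in\MM(E)$ forces no hyperplane to have full mass, but in dimension $0$ there is nothing to check and evenness forces $c=0$). For the inductive step, suppose the result holds in dimension $n-1$. Given $\mu\in\MM(\R^n)$, the idea is to slice $\R^n$ by a family of parallel hyperplanes, split $\mu$ into two equal halves, and apply the induction hypothesis on a single hyperplane after projecting both halves onto it, taking care to arrange that the two resulting $(n-1)$-dimensional equipartitions share a common center.

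The key steps, in order, would be: first, fix a direction $v$ (say $v=e_n$) and, using the fact that $\mu$ charges no hyperplane, choose the unique affine hyperplane $F$ orthogonal to $v$ that bisects $\mu$, so that the halfspaces $F+\R_+v$ and $F+\R_-v$ each carry mass $\tfrac12\mu(\R^n)$; denote by $\mu_+$ and $\mu_-$ the restrictions of $\mu$ to these halfspaces, and let $\pi$ be the projection onto $F$ along $v$, so that $\pi_*\mu_+$ and $\pi_*\mu_-$ are measures on $F$ of equal total mass. The delicate point is that we cannot simply apply the induction hypothesis to $\pi_*\mu_+$ and to $\pi_*\mu_-$ separately, because their Yao-Yao centers will generally differ, whereas Definition~\ref{YYP} requires $\PP_+$ and $\PP_-$ to have the \emph{same} center. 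This is the main obstacle, and it is where a continuity/topological argument enters: one varies the bisecting hyperplane $F$ (equivalently, the ``tilt'' of the splitting, or more precisely one must set up a family of admissible bisections parametrized by a ball or sphere) and uses a fixed-point or Borsuk--Ulam type argument to find a configuration for which a common center exists. Concretely, I expect the argument to proceed by building, for each choice in some parameter space, a candidate center for $\mu_+$ via the induction hypothesis, then a candidate center for $\mu_-$, and showing that the map sending one to a correction of the other has a fixed point; the parameter space and the map must be chosen so that a standard fixed-point theorem (Brouwer) applies, which requires checking compactness and continuity of the selections coming from the inductive construction.

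For the even case, the plan is to exploit symmetry to locate the center at $0$ directly: one chooses the bisecting hyperplane $F$ through $0$ (which works because evenness of $\mu$ makes the two halfspaces $F+\R_\pm v$ symmetric, hence of equal mass), and then observes that $\pi_*\mu_+$ and $\pi_*\mu_-$ are related by the central symmetry $x\mapsto -x$ of $F$; if one can choose the inductive construction to be equivariant under this symmetry — so that a Yao-Yao equipartition of $\pi_*\mu_+$ centered at $0$ is carried by $x\mapsto-x$ to one of $\pi_*\mu_-$ centered at $0$ — then the common-center requirement is automatically satisfied at $0$, and gluing along $v$ produces a Yao-Yao equipartition of $\mu$ centered at $0$. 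The main thing to verify carefully here is that the inductive hypothesis can be invoked in a symmetry-compatible way, i.e. that the equipartition produced for $\pi_*\mu_+$ in dimension $n-1$ and its mirror image together satisfy \eqref{equip} with center $0$; this reduces to the even case of the induction hypothesis applied to the symmetrized problem, and the bookkeeping — rather than any genuinely hard estimate — is what needs attention.
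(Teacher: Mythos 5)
The paper does not actually prove Theorem~\ref{main}: it defers entirely to the reference \cite{monyaoyao}, noting that the original Yao--Yao argument is sketchy. So your proposal has to stand on its own, and as written it does not. You correctly identify the inductive skeleton (bisect $\mu$ by a hyperplane $F$, project the two halves onto $F$ along a direction $v$, apply the induction hypothesis, glue), and you correctly identify the crux: the two $(n-1)$-dimensional equipartitions must share a center. But that crux is exactly where the proposal stops being a proof. ``A fixed-point or Borsuk--Ulam type argument'' over an unspecified parameter space is a placeholder for the entire content of the theorem. To run a Brouwer-type argument on the map $v\mapsto c_+(v)-c_-(v)$ (the difference of the Yao--Yao centers of the two projected half-measures as the projection direction $v$ ranges over $\vec{E}\setminus\vec{F}$), you need this map to be well defined and continuous; since a measure could a priori admit several Yao--Yao centers, this requires proving, as part of the induction, that the Yao--Yao center of a measure in $\MM$ is \emph{unique} and depends continuously on the measure, and then a boundary-behaviour or degree argument to produce a zero. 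None of this is routine bookkeeping; it is precisely what the cited paper \cite{monyaoyao} is devoted to.

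The even case as you present it is also incorrect as stated. If $\mu$ is even, take $F=v^\perp\ni 0$; then $\pi_*\mu_-$ is the image of $\pi_*\mu_+$ under $x\mapsto -x$, so the mirror image of an equipartition of $\pi_*\mu_+$ centered at $c$ is an equipartition of $\pi_*\mu_-$ centered at $-c$, not at $c$. The common-center condition therefore forces $c=0$, and since $\pi_*\mu_+$ is in general \emph{not} even, the induction hypothesis gives no reason for $0$ to be a center of $\pi_*\mu_+$; your claimed equivariance does not close the argument. The clean route, once uniqueness of the Yao--Yao center is available, is immediate: if $c$ is a center of the even measure $\mu$, then so is $-c$ (apply $x\mapsto -x$ to the whole equipartition), hence $c=-c=0$. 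This again shows that uniqueness, which your proposal never addresses, is the load-bearing ingredient.
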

It is due to Yao and Yao \cite{yaoyao}. They have some extra hypothesis on the measure and their paper is very sketchy, so we refer to \cite{monyaoyao} for a proof of this very statement.
\section{Proof of the Fradelizi-Meyer inequality}
In this section, all integrals are taken with respect to the Lebesgue measure. Let us recall the Pr\'ekopa-Leindler inequality, which is a functional form of the famous Brunn-Minkowski inequality, see for instance \cite{ball} for a proof and selected applications. If $\varphi_1,\varphi_2,\varphi_3$ are non-negative and integrable functions on $\R^n$ satisfying $\varphi_1(x)^\lambda \varphi_2(y)^{1-\lambda} \leq \varphi_3(\lambda x+ (1-\lambda) y)$ for all $x,y$ in $\R^n$ and for some fixed $\lambda \in (0,1)$, then
\[\Bigl( \int_{\R^n}  \varphi_1  \Bigr)^\lambda  \Bigl( \int_{\R^n}  \varphi_2 \Bigr)^{1-\lambda}  \leq  \int_{\R^n}  \varphi_3. \]
The following lemma is a useful (see \cite{fradelizi-meyer,these-ball}) logarithmic version of Pr\'ekopa-Leindler. We recall the proof for completeness.
\begin{Lem}
\label{gm}
Let $f_1,f_2,f_3$ be non-negative Borel functions on $\R_+^n$ satisfying
\[ f_1(x) f_2(y) \leq \Bigl( f(\sqrt{x_1y_1},\dotsc,\sqrt{x_ny_n}) \Bigr)^2 . \]
for all $x,y$ in $\R_+^n$. Then
\begin{equation}
\label{pll}
\int_{\R_+^n} f_1 \int_{\R_+^n} f_2 \leq \Bigl( \int_{\R_+^n} f_3 \Bigr)^2 .
\end{equation}
\end{Lem}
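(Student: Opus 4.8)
The plan is to reduce the statement to the Pr\'ekopa--Leindler inequality recalled above, taken with parameter $\lambda=\tfrac12$. The mechanism is the exponential change of variables $x_j=\e^{u_j}$, which simultaneously turns the multiplicative structure of $\R_+^n$ into the additive structure of $\R^n$ and the coordinatewise geometric means $\sqrt{x_jy_j}$ into arithmetic means $\tfrac12(u_j+w_j)$.

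Concretely, for $i\in\{1,2,3\}$ I would introduce the function $g_i$ on $\R^n$ given by
\[
  g_i(u)=f_i\bigl(\e^{u_1},\dots,\e^{u_n}\bigr)\,\e^{u_1+\dots+u_n}.
\]
Each $g_i$ is non-negative and Borel, and since the Jacobian of $u\mapsto(\e^{u_1},\dots,\e^{u_n})$ is exactly $\e^{u_1+\dots+u_n}$ and this map parametrises $\R_+^n$ up to the Lebesgue-null set where some coordinate vanishes, one has $\int_{\R^n}g_i=\int_{\R_+^n}f_i$ for each $i$. Next I would rewrite the hypothesis: setting $x_j=\e^{u_j}$ and $y_j=\e^{w_j}$ gives $\sqrt{x_jy_j}=\e^{(u_j+w_j)/2}$, so the assumed bound becomes $f_1(\e^{u})\,f_2(\e^{w})\le f_3\bigl(\e^{(u+w)/2}\bigr)^2$, with the obvious shorthand for coordinatewise exponentials. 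Multiplying through by $\e^{\sum_j u_j}\,\e^{\sum_j w_j}$ and absorbing the factor on the right into $\bigl(\e^{\sum_j(u_j+w_j)/2}\bigr)^2$ turns this into
\[
  g_1(u)^{1/2}\,g_2(w)^{1/2}\ \le\ g_3\!\left(\tfrac12u+\tfrac12w\right)\qquad\text{for all }u,w\in\R^n .
\]

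This is precisely the hypothesis of the Pr\'ekopa--Leindler inequality with $\varphi_1=g_1$, $\varphi_2=g_2$, $\varphi_3=g_3$ and $\lambda=\tfrac12$; applying it yields $\bigl(\int g_1\bigr)^{1/2}\bigl(\int g_2\bigr)^{1/2}\le\int g_3$, and squaring and translating back through the change of variables gives exactly \eqref{pll}. There is no serious obstacle: the whole point is the change of variables, and the remainder is a direct appeal to Pr\'ekopa--Leindler. The only technicality is that the latter was stated for integrable functions while the $f_i$ here are merely Borel; this is handled in the usual way, by replacing $f_1$ and $f_2$ by the truncations $\min(f_i,k)\,\boldsymbol{1}_{[1/k,k]^n}$ (which makes the corresponding $g_i$ bounded with bounded support, hence integrable, without affecting the hypothesis) and letting $k\to\infty$ via monotone convergence.
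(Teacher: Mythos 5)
Your proposal is correct and is essentially the paper's own proof: the same exponential change of variables $x_j=\e^{u_j}$, the same functions $g_i(u)=f_i(\e^{u_1},\dotsc,\e^{u_n})\e^{u_1+\dotsb+u_n}$, and the same application of Pr\'ekopa--Leindler with $\lambda=\tfrac12$. Your version of the transformed hypothesis, $g_1(u)^{1/2}g_2(w)^{1/2}\leq g_3(\tfrac{u+w}{2})$, is in fact the correct one (the paper's display omits a square), and your truncation remark is a harmless extra precaution.
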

\begin{proof}
For $i=1,2,3$ we let
\[ g_i(x) = f_i( \e^{x_1}, \dotsc , \e^{x_n} ) \e^{x_1+\dotsb+x_n} . \]
Then by change of variable we have
\[ \int_{\R^n} g_i = \int_{\R_+^n} f_i . \]
On the other hand the hypothesis on $f_1,f_2,f_3$ yields
\[  g_1(x)g_2(y) \leq g_3(\tfrac{x+y}{2}) ,\]
for all $x,y$ in $\R^n$. Then by Pr\'ekopa-Leindler
\[ \int_{\R^n} g_1 \int_{\R^n} g_2 \leq \Bigl( \int_{\R^n} g_3 \Bigr)^2 . \qedhere \]
\end{proof}
\begin{Thm}
Let $f$ be a non-negative Borel integrable function on $\R^n$, and let $c$ be a Yao-Yao center for the measure with density $f$. Then $c$ is a Santal\'o point for $f$ with respect to any weight.
\end{Thm}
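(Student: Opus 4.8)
The plan is to reduce the statement to an application of the logarithmic Prékopa–Leindler inequality (Lemma~\ref{gm}) by cutting $\R^n$ into the cells of a Yao-Yao equipartition centred at $c$, and simultaneously cutting the domain of $g$ into the dual cells. Without loss of generality assume $c=0$ (translate $f$). Let $\PP$ be a Yao-Yao equipartition for the measure with density $f$, so each cell $A\in\PP$ satisfies $\int_A f = 2^{-n}\int_{\R^n} f$, and by Corollary~\ref{polarity} the dual cones $\PP^*=\{A^*: A\in\PP\}$ form a partition of $\R^n$ as well. The crucial geometric point is the duality between $A$ and $A^*$: if $x\in A$ and $y\in A^*$ then $x\cdot y\ge 0$, so the hypothesis \eqref{duality} applies to every such pair, giving $f(x)g(y)\le\rho(\sqrt{x\cdot y})^2$ whenever $x\in A$, $y\in A^*$.

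Next I would bring each pair $(A,A^*)$ into the coordinate form needed for Lemma~\ref{gm}. By \eqref{pos}, write $A=\pos(v_1,\dots,v_n)$ for a basis $v_1,\dots,v_n$ of $\R^n$; let $w_1,\dots,w_n$ be the dual basis (so $v_i\cdot w_j=\delta_{ij}$), and one checks $A^*=\pos(w_1,\dots,w_n)$. Parametrise $x=\sum x_i v_i$ with $x\in\R_+^n$ and $y=\sum y_j w_j$ with $y\in\R_+^n$; then $x\cdot y=\sum_i x_i y_i$. Set
\[
f_1(x)=|\det(v_i)|\,f\Bigl(\textstyle\sum_i x_iv_i\Bigr),\qquad
f_2(y)=|\det(w_j)|\,g\Bigl(\textstyle\sum_j y_jw_j\Bigr),\qquad
f_3(t)=\rho\Bigl(\bigl(\textstyle\sum_i t_i^2\bigr)^{1/2}\Bigr)
\]
on $\R_+^n$. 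Since $|\det(v_i)|\,|\det(w_j)|=1$, the hypothesis gives $f_1(x)f_2(y)\le f_3(\sqrt{x_1y_1},\dots,\sqrt{x_ny_n})^2$ for all $x,y\in\R_+^n$, because $\sum_i x_iy_i=\sum_i(\sqrt{x_iy_i})^2$. Lemma~\ref{gm} then yields
\[
\int_A f\,\int_{A^*} g=\int_{\R_+^n}f_1\int_{\R_+^n}f_2
\le\Bigl(\int_{\R_+^n}f_3\Bigr)^2
=\Bigl(\int_{A^*}\rho(|y|)\,dy\cdot|\det w_j|^{-1}\cdot|\det w_j|\Bigr)^2;
\]
more cleanly, changing variables back, $\int_{\R_+^n}f_3 = \int_{C}\rho(|y|)\,dy$ where $C$ is the \emph{standard} positive orthant, i.e.\ $\int_{\R_+^n}\rho\bigl((\sum t_i^2)^{1/2}\bigr)dt = 2^{-n}\int_{\R^n}\rho(|y|)\,dy$. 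So for every cell, $\int_A f\cdot\int_{A^*}g\le \bigl(2^{-n}\int_{\R^n}\rho(|y|)\,dy\bigr)^2$.

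Finally I would sum over the $2^n$ cells. Using the equipartition property $\int_A f=2^{-n}\int_{\R^n}f$ on the left, each term is $2^{-n}\bigl(\int_{\R^n}f\bigr)\int_{A^*}g$, and the cells $A^*$ tile $\R^n$ (Corollary~\ref{polarity}), so $\sum_{A}\int_{A^*}g=\int_{\R^n}g$. Hence
\[
2^{-n}\int_{\R^n}f\cdot\int_{\R^n}g=\sum_{A\in\PP}\int_A f\int_{A^*}g
\le 2^{n}\cdot\Bigl(2^{-n}\int_{\R^n}\rho(|y|)\,dy\Bigr)^2
=2^{-n}\Bigl(\int_{\R^n}\rho(|y|)\,dy\Bigr)^2,
\]
which is exactly \eqref{conclusion}. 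The even case follows because then $0$ is automatically a Yao-Yao center by Theorem~\ref{main}.

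The main obstacle I anticipate is bookkeeping the measure-theoretic details of the partition argument: the cells $A$ and their duals are only \emph{almost} disjoint (interiors disjoint, boundaries shared), so I must argue that the shared faces are Lebesgue-null — which holds because they are subsets of finitely many hyperplanes — and that the pointwise inequality \eqref{duality} need only hold on the pairs $(x,y)$ with $x\cdot y\ge 0$, which is automatic on $A\times A^*$. A secondary subtlety is verifying $A^*=\pos(w_1,\dots,w_n)$ for the dual basis and tracking the Jacobian factors so that the density of the pushed-forward measure is handled correctly; this is routine linear algebra but must be done carefully so that the constant comes out to exactly $2^{-n}\int\rho(|y|)\,dy$ per cell.
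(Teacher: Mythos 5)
Your proof is correct and follows essentially the same route as the paper: cut $\R^n$ into the Yao--Yao cells and their dual cones, apply the logarithmic Pr\'ekopa--Leindler lemma to each pair $(A,A^*)$, and sum using the equipartition property and Corollary~\ref{polarity}. The only (immaterial) difference is bookkeeping: the paper normalizes the linear map $T$ with $A=T(\R_+^n)$ to have determinant $1$ so that no Jacobian factors appear and each cell contributes $\bigl(\int_{\R_+^n}\rho(\abs{x})\,dx\bigr)^2$, whereas you carry the factors $\abs{\det(v_i)}\,\abs{\det(w_j)}=1$ explicitly and land on the equal quantity $\bigl(2^{-n}\int_{\R^n}\rho(\abs{x})\,dx\bigr)^2$.
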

Combining this result with Theorem~\ref{main} we obtain a complete proof of the Fradelizi-Meyer inequality.
\begin{proof}
It is enough to prove that if $0$ is a Yao-Yao center for $f$ then $0$ is a Santal\'o point. Indeed, if $c$ is a center for $f$ then $0$ is a center for 
\[ f_c:x \mapsto f(c+x) . \]
And if $0$ is a Santal\'o point for $f_c$ then $c$ is a Santal\'o point for $f$. \\
Let $\PP$ be a Yao-Yao equipartition for $f$ with center $0$. Let $g$ and $\rho$ be such that \eqref{duality} holds (with $c=0$). Let $A \in \PP$, by \eqref{pos}, there exists an operator $T$ on $\R^n$ with determinant $1$ such that $A= T \bigl(  \R_+^n \bigr)$. Let $S=(T^{-1})^*$, then $S\bigl( \R_+^n \bigr) = A^*$. Let $f_1 = f \circ T$, $f_2 = g \circ S$ and $f_3(x) = \rho(\abs{x})$. Since for all $x,y$ we have $T(x)\cdot S(y) = x\cdot y$, we get from \eqref{duality}
\[ f_1(x) f_2(y) \leq \rho( \sqrt{ x \cdot y } )^2 = f_3( \sqrt{x_1 y_1} , \dotsc , \sqrt{x_n y_n} )^2 , \]
for all $x,y$ in $\R_+^n$. Applying the previous lemma we get \eqref{pll}. By change of variable it yields
\[ \int_{A} f  \int_{ A^* } g  \leq \Bigl( \int_{\R_+^n} \rho( \abs{x} ) \, dx\Bigr)^2 . \]
Therefore 
\begin{equation}
\label{sum}
\sum_{A\in \PP} \int_A f \int_{A*} g \leq 2^n \Bigl( \int_{\R_+^n} \rho( \abs{x} ) \, dx \Bigr)^2 .
\end{equation} 
Since $\PP$ is an equipartition for $f$ we have for all $A\in\PP$
\[ \int_A f  =  2^{-n} \int_{\R^n} f . \]
By Corollary~\ref{polarity}, the set $\{ A^*, A\in \PP\}$ is a partition of $\R^n$, thus
\[ \sum_{A\in \PP} \int_{A^*} g  = \int_{\R^n} g  . \]
Inequality~\eqref{sum} becomes
 \[ \int_{\R^n} f \int_{\R^n} g  \leq 4^n \Bigl( \int_{\R_+^n} \rho(\abs{x}) \, dx \Bigr)^2  , \]
and of course the latter is equal to $\bigl( \int_{\R^n} \rho(\abs{x}) \, dx \bigl)^2$. 
\end{proof}
\end{document}